\newtheorem{thm}{Theorem}[section]
\newtheorem{lemma}[thm]{Lemma}
\newtheorem{cor}[thm]{Corollary}
\newtheorem{obs}[thm]{Observation}
\theoremstyle{definition}
\begin{document}
\title{A Note on Restricted Online Ramsey Numbers of Matchings}

\author{Vojt\v{e}ch Dvo\v{r}\'ak}
\address[Vojt\v{e}ch Dvo\v{r}\'ak]{Department of Pure Mathematics and Mathematical Statistics, University of Cambridge, UK}
\email[Vojt\v{e}ch Dvo\v{r}\'ak]{vd273@cam.ac.uk}

\begin{abstract}
The restricted online Ramsey numbers were introduced by Conlon, Fox, Grinshpun and He \cite{conlon} in 2019. In a recent paper \cite{briggs}, Briggs and Cox studied the restricted online Ramsey numbers of matchings and determined a general upper bound for them. They proved that for $n=3r-1=R_2(r K_2)$ we have $\tilde{R}_{2}(r K_2;n) \leq n-1$ and asked whether this was tight. In this short note, we provide a general lower bound for these Ramsey numbers. As a corollary, we answer this question of Briggs and Cox, and confirm that for $n=3r-1$ we have $\tilde{R}_{2}(r K_2;n) = n-1$. We also show that for $n'=4r-2=R_3(r K_2)$ we have $\tilde{R}_{3}(r K_2;n') = 5r-4$. 
\end{abstract}

\maketitle
\parindent 20pt
\parskip 0pt

\section{Introduction}\label{intro}

For families of graphs $\mathcal{G}_{1},...,\mathcal{G}_t$, the Ramsey number $R(\mathcal{G}_1,...,\mathcal{G}_t)$ is the smallest integer $n$ such that any colouring of $K_n$ with colours $1,...,t$ contains a graph $G_i$ in colour $i$ for some $G_i \in \mathcal{G}_i$ and some $i\in \left\{ 1,...,t \right\}$. The Ramsey numbers of graphs have been studied extensively, see for instance a survey of Conlon, Fox and Sudakov \cite{conlonsurvey}. 

Many variants of the Ramsey numbers have been considered. In particular, in 2019, Conlon, Fox, Grinshpun and He \cite{conlon} introduced the so-called restricted online Ramsey numbers. For families of graphs $\mathcal{G}_{1},...,\mathcal{G}_t$ and integer $n$ such that $n \geq R(\mathcal{G}_1,...,\mathcal{G}_t)$, the restricted online Ramsey number $\tilde{R}(\mathcal{G}_{1},...,\mathcal{G}_t;n)$ is the smallest integer $k$ for which Builder can always guarantee a win within the first $k$ moves of the following game between Builder and Painter. In each turn, Builder picks an edge of initially uncoloured $K_n$ and Painter chooses any colour out of $1,...,t$ and colours the edge with this colour. Builder wins once there is a graph $G_i$ in colour $i$ for some $G_i \in \mathcal{G}_i$ and some $i\in \left\{ 1,...,t \right\}$. We note that the definitions of $\tilde{R}(\mathcal{G}_{1},...,\mathcal{G}_t;n)$ differ slightly between the previous papers on this topic \cite{briggs,conlon,gonzalez}, but it is easy to see that all are equivalent.

Briggs and Cox \cite{briggs} studied the restricted online Ramsey numbers of matchings and trees. They proved the following general theorem.

\begin{thm}\label{briggsmain}
Fix $t \geq 2$ and positive integers $r_1,...,r_t$. If $n \geq R(r_1 K_1,...,r_t K_t)$, then

\begin{equation*}
\tilde{R}(r_1 K_1,...,r_t K_t;n) \leq \frac{2t-1+(t-3) \log_{2}(t-2)}{t+1}n
\end{equation*}
\noindent
with the convention that $\log_{2}0=0$.
\end{thm}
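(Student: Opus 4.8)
The statement is an upper bound on a restricted online Ramsey number, so the plan is to exhibit an explicit Builder strategy and prove that, against every Painter, it forces a monochromatic matching $r_i K_2$ in some colour $i$ within the claimed number of moves. Throughout, Builder maintains for each colour $i$ a monochromatic matching $M_i$ of that colour, and wins as soon as some $|M_i|$ reaches $r_i$. Since $n \geq R(r_1 K_2, \ldots, r_t K_2) = \max_i r_i + 1 + \sum_{i=1}^t (r_i - 1)$ by the Cockayne--Lorimer formula, Builder can never be permanently blocked --- some colour must eventually fill up --- so the entire content of the theorem is \emph{efficiency}: bounding the number of edges rather than merely guaranteeing a win.

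First I would isolate the basic \emph{extension move}. Given two vertices $u,v$ that are unsaturated in the colour Painter is about to use, the edge $uv$ can always be appended to the relevant $M_i$, so one move raises $\sum_i |M_i|$ by one. Playing such moves on pairwise disjoint pairs would finish after $\sum_i (r_i-1)+1$ edges, but would consume $2\bigl(\sum_i(r_i-1)+1\bigr)$ distinct vertices, which in general exceeds $n$. The crux is therefore \emph{vertex reuse}: a single vertex may be an endpoint of a matched edge in each of the $t$ colours at once, so $n$ vertices provide about $tn$ endpoint slots, comfortably above the $2\sum_i r_i$ that are needed. The strategy must convert this slack into a genuine saving in moves, which means that vertices already saturated in some colours should be recycled as partners for fresh vertices in the remaining colours.

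The subroutine that I expect to generate the $(t-3)\log_2(t-2)$ term is a divide-and-conquer probe. Starting from a vertex $v$ already saturated in a known set of colours, Builder wants to attach $v$ to a partner in one of the remaining colours while wasting as few edges as possible on colours Painter declines to use. I would fix a set of $t-2$ candidate colours and play edges from $v$ arranged to \emph{binary-search} through them, so that each move roughly halves the set of colours behind which Painter can still be hiding and $v$ is incorporated into the appropriate matching after $O(\log_2(t-2))$ moves instead of $\Theta(t)$. Singling out two distinguished colours --- the maximum colour and one auxiliary colour handled by cheaper direct moves --- explains why the logarithm ranges over $t-2$, and why the coefficient degenerates under the convention $\log_2 0 = 0$ when $t \in \{2,3\}$, matching the values $n$ and $\tfrac{5}{4}n$ obtained in those cases.

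Finally I would run a global charging argument. The aim is to assign each move to one of the $n$ vertices and one of the $t+1$ ``roles'' that vertex can play (a matched endpoint in each of the $t$ colours, plus one probing-overhead role), and to show the total is at most $\tfrac{2t-1+(t-3)\log_2(t-2)}{t+1}\,n$, with the $2t-1$ collecting the direct extension moves and the logarithmic term the probing overhead. The hard part will be precisely this accounting: one must schedule vertex reuse and binary-search probing simultaneously and show that probing a vertex never strands others in a way that forces extra moves, and that Painter cannot lengthen the probing subroutine by repeatedly falling back on the two distinguished colours. The guarantee $n \geq R(r_1 K_2,\ldots,r_t K_2)$ only yields an eventual win, so every quantitative saving has to be argued by hand, and extracting the exact constant --- rather than a bound of the correct order --- is where the real difficulty lies.
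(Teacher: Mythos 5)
First, a point of orientation: the paper you were given does not prove this statement at all --- Theorem \ref{briggsmain} is quoted from Briggs and Cox \cite{briggs}, and the note's own contribution is the complementary \emph{lower} bound (Theorem \ref{main}), proved via a Painter strategy. So your sketch can only be judged on its own terms as a proof of the Briggs--Cox upper bound, and on those terms it is not a proof: it is a plan whose decisive steps are left open, as you yourself concede (``the hard part will be precisely this accounting'', ``extracting the exact constant \dots is where the real difficulty lies''). An upper bound on $\tilde{R}$ requires a fully specified Builder strategy together with a worst-case count of its moves against every Painter; neither is supplied.

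Two concrete gaps. First, the binary-search subroutine, which you rely on to produce the $(t-3)\log_2(t-2)$ term, is not well defined, because in this game there is no hidden information to search over: every coloured edge is visible to Builder, and Painter's answer to a single edge is one adversarially chosen colour. That answer does not bisect the set of ``remaining'' colours; Painter can always reply with a colour in which the probed vertex is already saturated, wasting the edge while shrinking your candidate set by at most one. Whatever mechanism genuinely yields the logarithm must be structural --- for instance, merging partially built configurations across roughly $\log_2(t-2)$ levels --- and this is precisely what your outline does not construct. Second, the charging argument is pure intention: you name the target constant $\frac{2t-1+(t-3)\log_2(t-2)}{t+1}$ and declare that moves will be assigned to $t+1$ roles per vertex, but you give no invariant, no potential function, and no per-move accounting, so nothing in the proposal forces the move count to come out at this value rather than at some weaker bound. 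As it stands, the proposal establishes only what follows trivially from $n \geq R(r_1 K_2,\dots,r_t K_2)$: that Builder wins eventually, with no control on the number of moves.
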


Briggs and Cox \cite{briggs} describe a further refinements of their proof in the cases $t=2,3,4$, which imply the following result for $r_1=...=r_t=r$ and $n=R_t(r K_2)$.

\begin{thm}\label{briggsspecific} 
Fix $r \geq 1$ and let $n_2=R_2(r K_2)=3r-1$, $n_3=R_3(r K_2)=4r-2$, $n_4=R_4(r K_2)=5r-3$. Then $\tilde{R}_{2}(r K_2;n_2) \leq 3r-2=n_2 -1$, $\tilde{R}_{3}(r K_2;n_3) \leq 5r-4$ and $\tilde{R}_{4}(r K_2;n_4) \leq 7r-5$.
\end{thm}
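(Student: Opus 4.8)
Since Theorem \ref{briggsspecific} asserts three upper bounds, the plan is to exhibit, for each $t\in\{2,3,4\}$, an explicit Builder strategy on $K_{n_t}$ that forces a monochromatic $rK_2$ within the stated number of moves. Throughout I would have Builder maintain, for every colour $c$, a monochromatic matching $M_c$, together with the set $F$ of \emph{free} vertices (those lying in no $M_c$), keeping the $M_c$ pairwise vertex-disjoint. The basic move is the \emph{fresh} move: querying an edge between two free vertices. Whatever colour Painter uses, that edge extends the corresponding $M_c$, so a fresh move is a genuine multi-threat, increasing $\sum_c|M_c|$ by one while consuming two free vertices. If Builder could play only fresh moves, pigeonhole would give some $|M_c|=r$ after $t(r-1)+1$ moves; the difficulty is that this needs about $2tr$ vertices, whereas only $n_t=R_t(rK_2)=(t+1)(r-1)+2\approx (t+1)r$ are available.

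For $t=2$ I would first let Builder play fresh moves until at most one free vertex remains, costing about $(3r-1)/2$ moves and leaving a red and a blue matching whose sizes are typically around $\tfrac34 r$ each, so neither has reached $r$. The key second idea is to recover vertices through \emph{monochromatic cherries}. A path $x-y-z$ in colour $c$ has matching number one, but each endpoint behaves like a spare slot for colour $c$: a later colour-$c$ edge at $x$ avoiding $z$ pairs with $(y,z)$, and symmetrically at $z$. Crucially, an edge joining a red cherry slot to a blue cherry slot is again a double threat — if Painter paints it red it augments the red matching via one cherry, and if blue it augments the blue matching via the other — so Builder can keep making progress while reusing vertices, provided cherries are maintained in both colours. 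A charging argument then bounds the total: essentially every one of the $3r-1$ vertices is paid for once, giving $3r-2=n_2-1$ moves, which is the right answer against the extremal Painter colouring (a set $A$ of $r-1$ vertices with all incident edges red, the remaining $2r-1$ vertices spanning a blue clique).

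For $t=3$ and $t=4$ I would run the same build-and-augment scheme with $t$ matchings. Now a reused edge has $t$ possible colours and is no longer automatically a multi-threat, so Builder inserts a few auxiliary \emph{probe} edges to determine which matchings a given endpoint can safely extend before committing to an augmenting move. By Theorem \ref{briggsmain} (with all $r_i=r$) one already has $\tilde R_2(rK_2;n_2)\le n_2=3r-1$, $\tilde R_3(rK_2;n_3)\le\tfrac54 n_3\le 5r-3$ and $\tilde R_4(rK_2;n_4)\le\tfrac85 n_4\le 8r-5$. Thus for $t=2,3$ the refinement only has to save a single move, which I would obtain by showing that Builder's last augmenting move is never wasted; whereas for $t=4$ it must lower the leading coefficient from $8$ to $7$, which forces a genuinely better routing of the augmentation.

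The main obstacle throughout is the complete lack of slack: since $n_t$ equals the Ramsey number exactly, the fresh phase exhausts the board before any matching reaches $r$, after which no edge is a free multi-threat. Everything then rests on bounding the number of \emph{wasted} moves in the augmentation phase, i.e.\ guaranteeing that almost every edge Painter assigns to a non-leading colour is \emph{banked} as a new reusable cherry slot rather than merely thickening an existing monochromatic subgraph. Making this bookkeeping precise, and in particular pinning down the additive constant, is where the real work lies; it is hardest for $t=4$, where a loss of $O(r)$ wasted moves would already spoil the coefficient of $r$.
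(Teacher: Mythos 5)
First, a point of orientation: the paper you were given does not prove this statement at all --- Theorem \ref{briggsspecific} is quoted from Briggs and Cox \cite{briggs} (``Briggs and Cox describe a further refinements of their proof in the cases $t=2,3,4$, which imply the following result''), and the note's own contribution is the matching \emph{lower} bound, Theorem \ref{main}. So your proposal must stand on its own merits, and as written it is a plan rather than a proof; your closing sentence (``making this bookkeeping precise \dots is where the real work lies'') concedes exactly the part that constitutes the theorem. Concretely, for $t=2$ the charging argument that ``every one of the $3r-1$ vertices is paid for once'' is asserted, never carried out, and your phase structure has a real flaw: after the fresh phase, the coloured edges form vertex-disjoint monochromatic matching edges, so no cherries exist yet, and at most one free vertex remains --- so at that moment Builder has neither a cherry-slot-to-cherry-slot double threat nor the free endpoint needed to create a cherry in the way you describe. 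One can repair this by creating cherries from edges between a red-matched and a blue-matched vertex (then either answer yields a cherry), but then Painter controls \emph{which} colour's cherries appear; if she only ever concedes red cherries, your red-slot-to-blue-slot double threats never become available, and red-slot-to-red-slot edges are not double threats (a blue answer between two red-pendant vertices can be useless). Resolving this adversarial trade-off, and showing the wasted moves total at most roughly $r$, is precisely the missing content.

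For $t=3$ and $t=4$ the gap is larger still. Invoking Theorem \ref{briggsmain} as a black box and claiming you will ``save a single move'' by arguing the last augmenting move is never wasted does not work: you cannot shave even one move off a bound whose proof you have not reopened, since the slack could be lost anywhere inside it. Your arithmetic is right that $t=2,3$ need a saving of one move while $t=4$ needs a saving of $r$ moves (from $8r-5$ down to $7r-5$), but for $t=4$ you explicitly state only that this ``forces a genuinely better routing of the augmentation'' without exhibiting one, and the ``probe'' edges for $t\geq 3$ are never specified (how many, where, and how their cost enters the count of $5r-4$, respectively $7r-5$). So none of the three claimed inequalities is actually established. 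The correct reference point is the inductive/weighted strategy analysis in \cite{briggs}; a self-contained proof would need, at minimum, an explicit Builder strategy together with a potential-function or charging invariant that is verified against every possible Painter response, which is what your sketch defers.
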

 
They ask whether we have $\tilde{R}_{2}(r K_2;n_2) = n_2 -1$. The aim of this short note is to verify that this indeed holds. We also show that the bound $\tilde{R}_{3}(r K_2;n_3) \leq 5r-4$ is tight and that the bound $\tilde{R}_{4}(r K_2;n_4) \leq 7r-5$ is tight except possibly for the exact value of the additive constant.

By describing a suitable strategy of Painter, we prove the following more general lower bound.

\begin{thm}\label{main}
Fix $t \geq 2$ and positive integers $r_1,...,r_t$. If $n \geq R(r_1 K_1,...,r_t K_t)$, then $\tilde{R} (r_1 K_2,...,r_t K_2;n) \geq 3(\sum_{i=1}^{t}r_i-t+1)-n$.
\end{thm}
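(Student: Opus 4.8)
The plan is to describe an explicit Painter strategy and to track a single structural invariant. Throughout, write $s := \sum_{i=1}^t r_i - t + 1$, so that the claimed bound is $\tilde R \geq 3s - n$ and it suffices to show Painter can survive $3s-n-1$ moves without any colour $i$ containing a matching of size $r_i$. For a partial colouring let $G_i$ be the graph of colour-$i$ edges, $\nu_i = \nu(G_i)$ its matching number, $c_i$ the number of components of $G_i$ containing at least one edge, and $q$ the number of non-isolated vertices of the whole coloured graph $G$. Two elementary facts drive the argument. First, a connected graph with at least one edge on $p$ vertices has at least $p-1 \geq 2\nu-1$ edges, so summing over the nontrivial components of each colour gives $e(G_i) \geq 2\nu_i - c_i$ and hence
\begin{equation*}
e(G) \;\geq\; 2\sum_{i=1}^t \nu_i \;-\; \sum_{i=1}^t c_i .
\end{equation*}
Second, Painter is never forced to lose while some colour has slack: a colour $i$ is a safe choice for an edge $uv$ unless $\nu_i = r_i-1$ and adding $uv$ raises $\nu_i$, so the first move on which Painter loses must saturate every colour, i.e. $\nu_i = r_i-1$ for all $i$ and the new edge increases all of them. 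In particular at the losing position $\sum_i \nu_i = \sum_i (r_i-1) + 1 = s$.

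The heart of the proof will be the invariant
\begin{equation*}
\Phi := \sum_{i=1}^t \bigl(c_i + \nu_i\bigr) \;\leq\; q \;\leq\; n,
\end{equation*}
which Painter maintains after every move. Granting it, at the losing position we combine the three ingredients: from $\Phi \leq n$ and $\sum_i \nu_i = s$ we get $\sum_i c_i \leq n - s$, whence
\begin{equation*}
e(G) \;\geq\; 2s - \sum_{i=1}^t c_i \;\geq\; 2s - (n-s) \;=\; 3s - n .
\end{equation*}
As $e(G)$ is exactly the number of moves, Builder needs at least $3s-n$ of them, which is the assertion. The invariant is pleasantly tight on the small cases: for $t=2$, $r_1=r_2=2$, $n=5$ the $4$-move losing position $\text{red}=\{ab\}$, $\text{blue}=\{ac,cd,de\}$ has $\Phi = q = 5$ and $e(G)=4=3s-n$.

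It remains to design Painter's rule and verify $\Phi \leq q$, and this is where the real work lies. Writing $\Delta$ for the effect of colouring $uv$ with colour $i$, one has $\Delta\Phi = \Delta\nu_i + \Delta c_i$, while $\Delta q$ is the number of endpoints of $uv$ that were previously isolated; so it suffices to exhibit, each move, a colour with $\Delta\nu_i + \Delta c_i \leq \Delta q$. If both endpoints are new ($\Delta q = 2$) any colour creates a fresh edge-component with $\Delta\nu_i = \Delta c_i = 1$; if exactly one endpoint is new ($\Delta q = 1$) Painter uses a colour already present at the other endpoint, so $\Delta c_i = 0$ and $\Delta\Phi \leq 1$. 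The delicate case is when both endpoints are already used ($\Delta q = 0$): Painter must then find a colour that \emph{absorbs} $uv$ with $\Delta\nu_i + \Delta c_i \leq 0$, either by routing it so that it merges two components of one colour ($\Delta c_i = -1$) or so that it meets a vertex covered by every maximum matching of that colour ($\Delta\nu_i = 0$, $\Delta c_i = 0$).

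I expect the main obstacle to be exactly the existence of such an absorbing colour in the $\Delta q = 0$ case, together with the requirement that choosing it never exceeds a threshold prematurely. The intended resolution is a dichotomy: I plan to show, using the Gallai--Edmonds structure of each colour class (so that a non-isolated vertex exposable in its colour lies in a nontrivial odd component), that the \emph{only} configurations admitting no absorbing colour are the fully saturated ones. If this holds, then Painter can maintain $\Phi \leq q$ at every move while losing only when all colours are saturated, so the two hypotheses used above — $\Phi \leq n$ and $\sum_i \nu_i = s$ at the losing position — hold simultaneously, and the bound follows. Proving this absorb-or-saturate dichotomy, and checking that the absorbing move can always be taken safely, is the crux of the argument.
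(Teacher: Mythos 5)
Your proposal is a plan rather than a proof: the step you yourself identify as the crux (the ``absorb-or-saturate dichotomy'') is left open, and unfortunately it is false, so the plan cannot be completed in this form. Take $t=3$, $r_1=r_2=r_3=2$, $n=R(2K_2,2K_2,2K_2)=6$, and let Builder play $ua$, $ab$, $vc$, $cb$, $uv$ on five distinct vertices $u,a,b,v,c$. Under your rules Painter's replies are forced up to symmetry: $ua$ gets some colour, say red; $ab$ must reuse the colour at $a$ (red), since any other colour already violates $\Phi\le q$; $vc$ cannot be red (that would complete a red $2K_2$), say it is blue; and $cb$ must be blue, since red completes a red $2K_2$ (the path $u$--$a$--$b$--$c$ contains the matching $\{ua,bc\}$) while green has $\Delta\Phi=2>1=q-\Phi$. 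After these four moves red is the path $u$--$a$--$b$, blue is the path $v$--$c$--$b$, $\Phi=4$ and $q=5$. Now Builder plays $uv$, with $\Delta q=0$: red creates the red matching $\{ab,uv\}$ and loses; blue creates the blue matching $\{cb,uv\}$ and loses; green is perfectly safe ($\nu_{\mathrm{green}}$ becomes $1<r_3$) but has $\Delta\nu+\Delta c=2$, destroying the invariant. So no colour is absorbing, Painter is not forced to lose, and the position is not fully saturated ($\nu_{\mathrm{green}}=0\ne r_3-1$): the dichotomy fails, and no play consistent with your rules keeps $\Phi\le q$ without losing prematurely. This is not a boundary artifact: replacing $r_3=2$ by a large $r_3$ changes nothing in these five moves, while the bound to be proven becomes $3s-n=r_3+4$; Builder thus wrecks the invariant at move five and your endgame count is unavailable for the remainder of a long game.

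Two further problems would persist even where the dichotomy happens to hold (e.g.\ $t=2$). First, your final count applies $\Phi\le n$ \emph{after} the losing move, but the losing move itself need not preserve the invariant: at a forced loss every colour satisfies $\Delta\nu_m=1$, so with $\Delta q=0$ the best Painter can guarantee is $\Delta\Phi\le 1$, degrading the conclusion to $e(G)\ge 3s-n-1$; losing even one from the constant is fatal here, since answering Briggs and Cox requires the exact value $3r-2$, not $3r-3$. Second, and more structurally, your potential charges every non-isolated vertex, whereas Builder can make vertices non-isolated very cheaply. The paper's proof avoids both issues by having Painter impose structure rather than track matching numbers: every coloured edge is declared free or rooted, all edges rooted at a common vertex form a monochromatic star, and a move is ``charged'' only when both its endpoints are fresh (type III), where fresh means neither a root nor an endpoint of a free edge --- in particular, leaves of stars remain fresh and reusable. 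In the position above the paper's Painter regards red as a star rooted at $a$ and blue as a star rooted at $c$, so $u$ and $v$ are still fresh and colouring $uv$ green is a legitimate charged move; the budget $A_i+B_i\ge\nu_i$ of each colour grows only when two fresh vertices are consumed, and counting the at most $n$ fresh vertices yields exactly the claimed constant. Some accounting of this kind, in which star leaves are not spent, is the ingredient your approach is missing, and it cannot be replaced by the Gallai--Edmonds dichotomy you propose.
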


As a corollary, we answer the question of Briggs and Cox \cite{briggs}.

\begin{cor}
Fix $r \geq 1$ and let $n_2=R_2(r K_2)=3r-1$, $n_3=R_3(r K_2)=4r-2$, $n_4=R_4(r K_2)=5r-3$. Then $\tilde{R}_{2}(r K_2;n_2) = 3r-2=n_2 -1$, $\tilde{R}_{3}(r K_2;n_3) = 5r-4$ and $\tilde{R}_{4}(r K_2;n_4) \in \left\{ 7r-6,7r-5 \right\} $.
\end{cor}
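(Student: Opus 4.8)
The plan is to obtain this corollary directly by combining the general lower bound of Theorem \ref{main} with the upper bounds of Theorem \ref{briggsspecific}, specialised to the symmetric case $r_1 = \cdots = r_t = r$. The first step is to confirm that Theorem \ref{main} is applicable: in each case $n_t = R_t(rK_2) = R(rK_2,\ldots,rK_2)$, so the hypothesis $n \geq R(r_1 K_2,\ldots,r_t K_2)$ holds with equality and the lower bound is available. Substituting $\sum_{i=1}^{t} r_i = tr$ into Theorem \ref{main} gives
\begin{equation*}
\tilde{R}_t(rK_2; n_t) \geq 3(tr - t + 1) - n_t.
\end{equation*}

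Next I would evaluate the right-hand side in each of the three cases. For $t=2$ and $n_2 = 3r-1$ this yields $3(2r-1)-(3r-1) = 3r-2$; for $t=3$ and $n_3 = 4r-2$ it yields $3(3r-2)-(4r-2) = 5r-4$; and for $t=4$ and $n_4 = 5r-3$ it yields $3(4r-3)-(5r-3) = 7r-6$. These are routine linear computations, so I would simply record the three resulting lower bounds $3r-2$, $5r-4$ and $7r-6$.

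Finally I would compare these against the upper bounds of Theorem \ref{briggsspecific}, namely $\tilde{R}_2(rK_2;n_2) \leq 3r-2$, $\tilde{R}_3(rK_2;n_3) \leq 5r-4$ and $\tilde{R}_4(rK_2;n_4) \leq 7r-5$. For $t=2$ and $t=3$ the matching lower and upper bounds coincide, forcing $\tilde{R}_2(rK_2;n_2) = 3r-2 = n_2-1$ and $\tilde{R}_3(rK_2;n_3) = 5r-4$. For $t=4$ the lower bound $7r-6$ and the upper bound $7r-5$ differ by exactly one, which pins down $\tilde{R}_4(rK_2;n_4) \in \{7r-6,\,7r-5\}$.

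Since the statement is a corollary, there is essentially no obstacle beyond checking that the arithmetic reproduces the claimed constants; all the substantive content is carried by Theorems \ref{main} and \ref{briggsspecific}. The only point worth a moment's care is the verification that $n_t$ equals the Ramsey number exactly, so that Theorem \ref{main} applies and its lower bound is not merely an inequality for some larger $n$.
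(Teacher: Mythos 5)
Your proposal is correct and is exactly the paper's (implicit) argument: the corollary follows by substituting $r_1=\cdots=r_t=r$ and $n_t=R_t(rK_2)$ into the lower bound of Theorem \ref{main}, computing $3r-2$, $5r-4$, $7r-6$ for $t=2,3,4$, and combining with the upper bounds $3r-2$, $5r-4$, $7r-5$ of Theorem \ref{briggsspecific}. The arithmetic checks out in all three cases, so nothing is missing.
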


It remains unclear whether for $t$ and $r$ large and $n=R_t(r K_2)$, the magnitude of $\tilde{R}_{t}(r K_2 ;n)$ is closer to the upper bound from Theorem \ref{briggsmain} or to the lower bound from Theorem \ref{main}.

\section{Proof of Theorem \ref{main} }

Consider the game played with $t$ colours on the edges of an initially uncoloured $K_n$. To prove Theorem \ref{main}, we will describe a strategy of Painter that ensures that after $T=3(\sum_{i=1}^{t}r_i-t+1)-n-1$ moves (where by a move we mean Builder choosing some still uncoloured edge and Painter colouring it), there is no $r_i K_2$ of colour $i$ for $i=1,...,t$.

While taking her turns (and to help her with her colouring decisions), Painter will moreover assign the following states to the coloured edges of $K_n$ and to all the vertices of $K_n$. Coloured edges are either \textit{free}, or \textit{rooted}. Every rooted edge is characterized by its \textit{root}, which is a vertex of $K_n$. Painter will assign (and update) the states of the coloured edges according to the strategy described below.

Vertices are of three types, characterized in the following way.

\begin{itemize}
\item If a vertex $v$ is a root of at least one coloured edge, it is of type I.

\item If a vertex $v$ is not of type I, but there is at least one free edge with endpoint $v$, it is of type II.

\item If a vertex $v$ is neither of type I nor of type II, it is of type III.
\end{itemize}

In particular, note that initially all the vertices are of type III, since no edges are coloured at the start of the game.

For $0 \leq j \leq $ $n \choose 2$ and $i=1,...,t$, let $A_j(i)$ be a number of type I vertices that are roots to at least one edge of colour $i$ after $j$ moves and let $B_j(i)$ be a number of free edges of colour $i$ after $j$ moves. Let $A_j=\sum_{i=1}^{t}A_j(i)$ and $B_j=\sum_{i=1}^{t}B_j(i)$.

Assume Builder chooses the edge $ab$ in $(k+1)$st turn of hers (where $0 \leq k \leq $ $n \choose 2$ $-1$). Without loss of generality (as we could otherwise switch $a$ and $b$), we can assume that if $b$ is of type I, then $a$ is also of type I; and if $b$ is of type II, then $a$ is of type I or of type II. Painter chooses the colour of an edge and updates the states of the coloured edges as follows.

\begin{enumerate}[label=(\roman*)]
\item\label{i} If $a$ is a vertex of type I, we declare the edge $ab$ to be rooted at $a$. By definition, there exists at least one other edge rooted at $a$, of some colour $c_1$ (if there are more edges rooted at $a$, pick one arbitrarily). We colour $ab$ by colour $c_1$.

\item\label{ii} If $a$ is a vertex of type II, there exists by definition a free edge $ac$ for some $c$, of some colour $c_2$ (if there are more free edges with endpoint $a$, pick one arbitrarily). We declare both edges $ab,ac$ to be rooted at $a$ and colour $ab$ in $c_2$.

\item\label{iii} If $a$ is a vertex of type III, then the edge $ab$ is declared to be free. It is coloured in any colour $c_3$ such that $A_k(c_3)+B_k(c_3) \leq r_{c_3}-2$ if at least one such colour exists, and if not in an arbitrary colour.
\end{enumerate}

The next two observations are straightforward.

\begin{obs}\label{typeiii}
The number of vertices of type III:
\begin{itemize}
\item stays the same during move \ref{i}

\item increases by $1$ or stays the same during move \ref{ii}

\item decreases by $2$ during move \ref{iii}
\end{itemize}
\end{obs}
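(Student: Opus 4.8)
The plan is to handle each of the three move types separately, tracking for every affected vertex whether it enters or leaves the set of type III vertices, and relying on the definitions of the three types together with the WLOG ordering of $a$ and $b$. The running principle is that a newly coloured edge is declared either rooted (in moves \ref{i} and \ref{ii}) or free (in move \ref{iii}), and that a vertex is type III precisely when it is neither the root of a coloured edge nor an endpoint of a free edge; so a vertex changes its type only if one of these two attributes changes.

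For move \ref{i}, the vertex $a$ is already type I and remains a root, so it stays type I. The only other vertex touched is $b$, and since $ab$ is rooted at $a$, the vertex $b$ neither becomes a root nor gains an incident free edge; hence its type is unchanged and the count of type III vertices does not change. For move \ref{iii}, I would first use the WLOG ordering to force $b$ to be type III as well: if $b$ were type I then $a$ would be type I, and if $b$ were type II then $a$ would be type I or II, both contradicting that $a$ is type III. The new edge $ab$ is free, so each of $a$ and $b$ now has an incident free edge while remaining a non-root, turning both from type III into type II. Hence the type III count drops by exactly $2$.

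The slightly more delicate case is move \ref{ii}. Here $a$ passes from type II to type I, since after the move it is the root of both $ab$ and the formerly free edge $ac$; so $a$ leaves type II and never touches the type III tally. As in move \ref{i}, the edge $ab$ is rooted at $a$, so $b$ gains neither root status nor a free edge and keeps its type. The only vertex that can possibly become type III is $c$: the edge $ac$ changes from free to rooted-at-$a$, so $c$ loses one incident free edge. If $c$ is not a root and $ac$ was its unique free edge, then $c$ becomes type III; in every other case $c$'s type is unchanged. Consequently the number of type III vertices either increases by $1$ (when $c$ makes this transition) or stays the same, and it can never decrease.

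I expect the main thing to watch is the bookkeeping in move \ref{ii}: one should confirm that $a,b,c$ are pairwise distinct (indeed $a \neq c$ since $ac$ is an edge, and $b \neq c$ since $ab$ is uncoloured while $ac$ is coloured) and verify that $c$ really is the only candidate for a new type III vertex. The other two cases are immediate once the WLOG ordering is invoked, and no step requires any genuine computation.
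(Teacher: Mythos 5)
Your proof is correct, and it supplies precisely the case analysis that the paper itself omits: the paper simply declares this observation ``straightforward'' and gives no argument, so your write-up is the intended one spelled out in full. The key points you verify --- that in move (ii) the vertices $a,b,c$ are distinct, that only $c$ can enter type III (when $ac$ was its unique free edge and it is not a root) and no vertex can leave it, and that in move (iii) the WLOG ordering forces $b$ to be type III so that both endpoints drop to type II --- are exactly the bookkeeping needed, with no gaps.
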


\begin{obs}\label{sumab}
If move $j$ was $\ref{i}$ or $\ref{ii}$, we have $A_j(i)+B_j(i)=A_{j-1}(i)+B_{j-1}(i)$ for $i=1,...,t$. If move $j$ was $\ref{iii}$ and Painter used colour $c$, we have $A_j(c)+B_j(c)=A_{j-1}(c)+B_{j-1}(c)+1$ and for any $c' \neq c$ we have $A_j(c')+B_j(c')=A_{j-1}(c')+B_{j-1}(c')$.
\end{obs}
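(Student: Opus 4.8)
The plan is to verify the three equalities by a direct case analysis on the type of move $j$, tracking exactly which edges change their free/rooted status and which vertices enter or leave the set of type I vertices that root an edge of a given colour. The governing principle is that $B_j(i)$ changes only through the edges whose state is explicitly altered during the move, while $A_j(i)$ changes only when some vertex becomes (or ceases to be) a root of colour $i$.

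First I would treat move \ref{i}. Here $a$ is already of type I, and the new edge $ab$ is declared rooted at $a$ in a colour $c_1$ that already appears on some edge rooted at $a$. No edge becomes or stops being free, so $B_j(i)=B_{j-1}(i)$ for every $i$; and since $a$ was already a root of colour $c_1$ while no other vertex changes its set of root-colours (the edge is rooted at $a$, not $b$, and the rooted and free edges incident to $b$ are untouched), we also get $A_j(i)=A_{j-1}(i)$ for every $i$. The claimed equality follows termwise.

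Next I would treat move \ref{ii}, which is the delicate case and the one I expect to be the main obstacle, since here $A$ and $B$ both change and the changes must cancel. The vertex $a$ is of type II, hence a root of no colour before the move; the free edge $ac$ of colour $c_2$ is re-declared rooted at $a$, and the new edge $ab$ is also rooted at $a$ in colour $c_2$. The key points to check are: (a) the only free edge that disappears is $ac$, so $B_j(c_2)=B_{j-1}(c_2)-1$ and $B_j(c')=B_{j-1}(c')$ for $c'\neq c_2$ (the other free edges at $a$ keep their state, and $ab$ is rooted rather than free); and (b) $a$ becomes a new type I vertex rooting an edge of colour $c_2$ and of no other colour, so $A_j(c_2)=A_{j-1}(c_2)+1$ and $A_j(c')=A_{j-1}(c')$ for $c'\neq c_2$. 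One must also confirm that neither $b$ nor $c$ alters any $A$-count: $b$ is not made a root, and although $c$ may drop from type II to type III when $ac$ is re-rooted, such a change never affects $A$ since only type I vertices contribute to it. Adding the two changes for colour $c_2$ gives $A_j(c_2)+B_j(c_2)=A_{j-1}(c_2)+B_{j-1}(c_2)$, and every other colour is unchanged.

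Finally I would treat move \ref{iii}. Here the stated without-loss-of-generality convention forces $b$ to also be of type III (otherwise $a$ would be of type I or type II), so neither endpoint is a root before the move and neither becomes one, as $ab$ is declared free. Thus $A_j(i)=A_{j-1}(i)$ for all $i$, while exactly one free edge, of the chosen colour $c$, is created, giving $B_j(c)=B_{j-1}(c)+1$ and $B_j(c')=B_{j-1}(c')$ for $c'\neq c$. Combining these yields the two stated identities. Assembling the three cases completes the proof; the only real care needed is the cancellation bookkeeping in move \ref{ii}.
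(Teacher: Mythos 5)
Your proof is correct: the case analysis on moves \ref{i}, \ref{ii}, \ref{iii}, including the cancellation of the $+1$ in $A_j(c_2)$ against the $-1$ in $B_j(c_2)$ during move \ref{ii} and the observation that a vertex dropping from type II to type III never affects the $A$-counts, is exactly the verification the paper has in mind. The paper itself offers no written proof (it declares the observation ``straightforward''), so your argument simply supplies the intended bookkeeping in full detail.
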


Using Observation \ref{typeiii} and Observation \ref{sumab}, we prove the key lemma. 

\begin{lemma}\label{useful}
We have $A_T+B_T \leq \sum_{i=1}^{t}r_i -t$.
\end{lemma}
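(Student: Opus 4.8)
The plan is to track the quantity $A_j + B_j$ across the course of the game and bound it using the two observations. The key insight is that $A_j + B_j = \sum_{i=1}^{t}(A_j(i) + B_j(i))$ can only increase during a move of type \ref{iii}, and by Observation \ref{sumab} it increases by exactly $1$ in that case while staying constant during moves \ref{i} and \ref{ii}. So if $N_{\mathrm{III}}$ denotes the number of type \ref{iii} moves made among the first $T$ moves, we have $A_T + B_T = N_{\mathrm{III}}$ (since $A_0 + B_0 = 0$ initially). Thus the whole task reduces to bounding the number of type \ref{iii} moves from above by $\sum_{i=1}^{t} r_i - t$.

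First I would count type \ref{iii} moves via the vertices of type III. By Observation \ref{typeiii}, each type \ref{iii} move decreases the number of type III vertices by exactly $2$, while type \ref{i} moves leave it unchanged and type \ref{ii} moves either leave it unchanged or increase it by $1$. Initially there are $n$ vertices of type III. Since the count of type III vertices can never go negative, and it drops by $2$ on each type \ref{iii} move while never decreasing on the other two move types, I would argue that the total decrease over the whole game cannot exceed $n$. This gives $2 N_{\mathrm{III}} \leq n + (\text{total increase from type \ref{ii} moves})$, so I need to control the increases as well; the cleanest route is to observe that each type \ref{ii} move converts a type II vertex into a type I vertex (the root $a$), and this conversion is irreversible — once a vertex is a root it stays of type I forever. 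Tracking the type II $\to$ type I transitions should let me bound the net flow into the type III pool.

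The main obstacle will be relating the number of type \ref{iii} moves precisely to $\sum_{i=1}^{t} r_i - t$ rather than merely to something like $n/2$. The clean bound must come from the colouring rule in \ref{iii}: Painter uses a colour $c_3$ with $A_k(c_3) + B_k(c_3) \leq r_{c_3} - 2$ whenever such a colour exists. I expect the crucial step to be showing that as long as some colour has $A_k(c) + B_k(c) \leq r_c - 2$, Painter can keep the per-colour count $A_j(c) + B_j(c)$ below $r_c - 1$, and that the total $\sum_i (A_j(i) + B_j(i))$ can reach at most $\sum_i (r_i - 1) = \sum_i r_i - t$ before every colour is ``saturated.'' Concretely, a type \ref{iii} move increments exactly one colour's count, and the rule ensures that whenever the move increments colour $c$ by choice it was because $A_k(c) + B_k(c) \leq r_c - 2$, so afterward $A_j(c) + B_j(c) \leq r_c - 1$; the delicate case is the ``arbitrary colour'' fallback when no good colour exists.

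I would therefore structure the argument around the claim that no type \ref{iii} move ever reaches the fallback branch within the first $T$ moves — equivalently, that before the sum $A_k + B_k$ hits $\sum_i r_i - t$ there is always a colour $c$ with $A_k(c) + B_k(c) \leq r_c - 2$. This follows from a pigeonhole-style observation: if $A_k + B_k \leq \sum_i r_i - t - 1$, then the $t$ quantities $A_k(i) + B_k(i)$ sum to at most $\sum_i r_i - t - 1 = \sum_i (r_i - 1) - 1$, so they cannot all satisfy $A_k(i) + B_k(i) \geq r_i - 1$, forcing some colour to have $A_k(c) + B_k(c) \leq r_c - 2$. Since the sum increases by at most $1$ per move and starts at $0$, it can reach at most $\sum_i r_i - t$ over the whole game, which is exactly the desired bound $A_T + B_T \leq \sum_{i=1}^{t} r_i - t$.
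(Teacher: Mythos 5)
Your reduction of the lemma to bounding the number of type \ref{iii} moves (via $A_T+B_T=C_3$, the count of such moves) is correct, and your first idea --- counting type III vertices --- is the right one, but you never close it, and the argument you actually commit to at the end is a non-sequitur. The colouring rule in \ref{iii} cannot bound $A_T+B_T$ at all: it only decides \emph{which} colour's counter is incremented, never \emph{whether} a type \ref{iii} move happens. Even in the fallback branch the edge is still coloured and declared free, so the total still increases by $1$. Consequently your final sentence --- ``since the sum increases by at most $1$ per move and starts at $0$, it can reach at most $\sum_i r_i - t$ over the whole game'' --- does not follow from anything you established; by that reasoning alone the sum could reach $T$, and $T$ exceeds $\sum_i r_i - t$ in general (for $t=2$, $r_1=r_2=r$, $n=3r-1$ one has $T=3r-3$ while $\sum_i r_i - t = 2r-2$). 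Your pigeonhole observation (a colour $c$ with $A_k(c)+B_k(c)\leq r_c-2$ exists whenever $A_k+B_k\leq \sum_i r_i - t - 1$) is true, but it is precisely the ingredient the paper uses to deduce Theorem \ref{main} \emph{from} the lemma --- it shows no colour's count can reach $r_m$ unless every colour is saturated --- and it plays no role in the lemma itself, whose statement and proof are independent of the colouring rule.

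The missing step is much simpler than the ``irreversible type II $\to$ type I transition'' bookkeeping you sketch: use the fact that the game has lasted only $T$ moves. Writing $C_2,C_3$ for the numbers of moves \ref{ii} and \ref{iii}, Observation \ref{typeiii} gives $n+C_2-2C_3\geq 0$ (the number of type III vertices starts at $n$, never goes negative, drops by $2$ on each move \ref{iii} and rises by at most $1$ on each move \ref{ii}), and the trivial bound $C_2+C_3\leq T$ then yields $3C_3\leq n+T$, i.e.\ $C_3\leq \frac{n+T}{3}=\sum_{i=1}^{t}r_i-t+\frac{2}{3}$. Since $A_T+B_T=C_3$ is an integer, the lemma follows. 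This is exactly where the specific value of $T$ enters; without invoking the cap on the total number of moves, no bound on $C_3$ of the required form can be obtained.
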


\begin{proof}
Let $C_2$ be the number of moves \ref{ii} up to time $T$, and let $C_3$ be the number of moves \ref{iii} up to time $T$. At time $T$, by Observation \ref{typeiii} we have at most $n+C_2-2 C_3$ vertices of type III. That implies $n+C_2-2 C_3 \geq 0$. Since we further have $C_2+C_3 \leq T$, we must have $C_3 \leq \frac{n+T}{3}$.

Now by Observation \ref{sumab}, $A_T+B_T \leq C_3 \leq \frac{n+T}{3}=\sum_{i=1}^{t}r_i-t+\frac{2}{3}$, and since $A_T+B_T$ is an integer, we have $A_T+B_T \leq \sum_{i=1}^{t}r_i-t$ as required.
\end{proof}

Continuing the proof of Theorem \ref{main}, we are now ready to show that after $T$ moves, there is no $r_i K_2$ of colour $i$ for $i=1,...,t$.

Note that the existence of $r_m K_2$ of colour $m$ would in particular imply that $A_T(m)+B_T(m) \geq r_m$. Because of the strategy of Painter and Observation \ref{sumab}, that would imply that $A_T(i)+B_T(i) \geq r_i -1$ for $i=1,...,t$. Hence we would have $A_T+B_T \geq (r_1-1)+...+r_m+...+(r_t-1)= \sum_{i=1}^{t}r_i-t+1$, contradicting Lemma \ref{useful}. Thus the proof of Theorem \ref{main} is finished.

\section*{Acknowledgements}

The author would like to thank his PhD supervisor professor B\'{e}la Bollob\'{a}s for his support.

\end{document}